\theoremstyle{plain}
\numberwithin{equation}{section}
\newtheorem{theorem}{Theorem}
\newtheorem{proposition}{Proposition}
\newtheorem{definition}{Definition}
\theoremstyle{remark}
\newtheorem{remark}{Remark}
\newtheorem{example}{Example}
\renewcommand{\epsilon}{\varepsilon}
\renewcommand{\phi}{\varphi}
\DeclareMathOperator{\Ker}{Ker}
\def\R{\mathbb{R}}
\def\Id{\text{\rm Id}}
\begin{document}
\title[Robustness of nonuniform exponential dichotomies]{Robustness of nonuniform exponential dichotomies under a wider class of perturbations}

\author[D. Dragi\v cevi\'c]{Davor Dragi\v cevi\'c}
\address{Faculty of Mathematics, University of Rijeka, Radmile Matej\v ci\' c 2, 51000 Rijeka, Croatia}
\email{ddragicevic@math.uniri.hr}

\begin{abstract}
The robustness property of exponential dichotomies refers to the stability of this notion under small linear perturbations. In recent work~\cite{PPX}, the authors have identified a new class of perturbations under which the notion of a nonuniform exponential dichotomy persists. In the present paper, we show that it is possible to extend this class. Moreover, unlike~\cite{PPX} where the results are restricted to the case of ordinary differential equations, in the present paper we deal with arbitrary evolution families consisting of possibly noninvertible linear operators. 
\end{abstract}

\maketitle

\section{Introduction}
The notion of a (uniform) exponential dichotomy introduced by Perron~\cite{Perron} plays an important role in the qualitative theory of nonautonomous dynamical systems. It can be regarded as a nonautonomous counterpart to the classical notion of hyperbolicity for autonomous dynamics. It has many interesting consequences including the existence of stable and unstable invariant manifolds for any sufficiently small nonlinear perturbation of a linear dynamics exhibiting exponential dichotomy (see, for example,~\cite{Aulbach}). In the same context, it is possible to obtain a  nonautonomous version of the classical Grobman-Hartman linearization result (see~\cite{Palmer}).

An important feature of the notion of an exponential dichotomy is its robustness property, which asserts that this notion persists under sufficiently small linear perturbations. This property was discussed by Massera and Sch\"affer~\cite{MS} as well as Coppel~\cite{Coppel} for nonautonomous ordinary differential equations on a finite-dimensional space. The case of ordinary differential equations on Banach spaces was treated by Dalecki\u \i \  and Kre\u \i n~\cite{DK}. A major contribution was due to Henry~\cite{Henry} who established the robustness property of exponential dichotomies for arbitrary evolution families consisting of possibly noninvertible linear operators, making the results applicable to the study of partial differential equations. For more recent contributions dealing with the robustness property of uniform exponential behavior for various classes of nonautonomous dynamics, we refer to~\cite{DSS, Huy, NP, PS, Popescu, Sasu} and references therein.

Recently, Barreira and Valls introduced the notion of a \emph{nonuniform} exponential dichotomy~\cite{BVbook}, which includes the notion of a uniform exponential dichotomy as a particular case. It turns out that the notion of a nonuniform exponential dichotomy is ubiquitous from the ergodic theory point of view (see~\cite[Chapter 10]{BVbook}). 

The robustness property of nonuniform exponential dichotomies for difference equations and ordinary differential equations on Banach spaces was established in~\cite{BSV} and~\cite{BV}, respectively.  Easier proofs based on the relationship between the existence of nonuniform exponential dichotomies and the so-called admissibility were given in~\cite{BDV1, BDV3, BDV2}.

In their recent paper~\cite{PPX}, Pinto, Poblete and Xia proposed a new condition for the robustness of nonuniform exponential dichotomies for nonautonomous dynamics with continuous time. In order to describe their idea, we will restrict (for simplicity) to the particular case of uniform exponential dichotomies. If a nonautonomous linear ordinary differential equation on a Banach space
\begin{equation}\label{LDE-intro}
x'=A(t)x \quad t\in \R
\end{equation}
admits a uniform exponential dichotomy, then it is well known (see~\cite{Popescu}) that its perturbation
\[
x'=(A(t)+B(t))x \quad t\in \R
\]
admits a uniform exponential dichotomy, provided that 
\begin{equation}\label{smallness}
\sup_{t\in \R}\|B(t)\|\le c,
\end{equation}
where $c>0$ is sufficiently small. We stress that the smallness of $c$ can be quantified in terms of constants related to the dichotomy of~\eqref{LDE-intro}. The main objective of~\cite{PPX} is to replace~\eqref{smallness} with a weaker condition, which, roughly speaking, replaces the uniform smallness condition~\eqref{smallness} with a smallness integrability condition (see Remark~\ref{RM3} for details).

The main purpose of this note is to illustrate that the robustness condition given in~\cite{PPX} can be further relaxed for a wide class of nonuniform exponential dichotomies. Moreover, in sharp contrast to~\cite{PPX}, we show that this can be done without making any restrictions on the size of the nonuniform part of the dichotomy (constant $\varepsilon$ in~\eqref{D1} and~\eqref{D2}).  We refer to Remark~\ref{RM3} for a detailed discussion.
Furthermore, while the results from~\cite{PPX} are restricted to the case of ordinary differential equations, in the present paper, we consider the case of possibly noninvertible dynamics by considering arbitrary evolution families.   

We stress that we are able to achieve all this and simultaneously simplify the arguments from~\cite{PPX} relying on the relationship mentioned above between exponential dichotomies and the admissibility property.

\section{Preliminaries}
Throughout this note $X=(X, \|\cdot \|)$ will denote an arbitrary Banach space. Moreover, $\mathcal B(X)$ will denote the space of all bounded linear operators acting on $X$ equipped with the operator norm that we also denote by $\|\cdot \|$.

We recall that a \emph{evolution family} on $X$ over $\mathbb R$ is a family $\{T(t, s): \ t, s\in \R, \ t\ge s\}\subset \mathcal B(X)$ satisfying the following properties:
\begin{itemize}
\item $T(t, t)=\Id$ for each $t\in \R$, where $\Id$ denotes the identity operator on $X$;
\item for $t\ge s\ge r$, 
\[
T(t, s)T(s, r)=T(t, r);
\]
\item for $x\in X$ and $s\in \R$, the map $t\mapsto T(t, s)x$ is continuous on $[s, \infty)$. Moreover,  for $x\in X$ and $t\in \R$, the map $s\mapsto T(t, s)x$ is continuous on $(-\infty, t]$.
\end{itemize}

We also recall the notion of a nonuniform exponential dichotomy.
\begin{definition}\label{NED}
We say that an evolution family $\{T(t, s): \ t, s\in \R, \ t\ge s\}\subset \mathcal   B(X)$ admits a \emph{nonuniform exponential dichotomy} if there exist constants $K, \alpha>0$ and $\varepsilon \ge 0$ and a family $\{P(t): t\in \mathbb R\}\subset \mathcal B(X)$ of projections on $X$ such that the following holds:
\begin{enumerate}
\item for $t\ge s$,
\[
T(t, s)P(s)=P(t)T(t, s),
\]
and $T(t, s)\rvert_{\Ker P(s)}\colon \Ker P(s)\to \Ker P(t)$ is an isomorphism;
\item for $t\ge s$,
\begin{equation}\label{D1}
\|T(t, s)P(s)\| \le Ke^{-\alpha (t-s)+\varepsilon |s|};
\end{equation}
\item for $t\le s$,
\begin{equation}\label{D2}
\|T(t, s)(\Id-P(s))\| \le Ke^{-\alpha (s-t)+\varepsilon |s|},
\end{equation}
where 
\[
T(t, s):=\left (T(s, t)\rvert_{\Ker P(t)}\right )^{-1}\colon \Ker P(s)\to \Ker P(t).
\]
\end{enumerate}
\end{definition}
The following result is obtained in~\cite[Proposition 5.6]{BDVbook}.
\begin{proposition}\label{LN}
Let $\{T(t, s): \ t, s\in \R, \ t\ge s\}\subset \mathcal B(X)$ be an evolution family that admits a nonuniform exponential dichotomy. Furthermore, let $K, \alpha>0$, $\varepsilon \ge 0$, and $\{P(t): t\in \R\}$ be as in Definition~\ref{NED}.
Then there exists a family $\{\| \cdot \|_t: \ t\in \mathbb R\}$ of norms on $X$ that satisfy the following properties:
\begin{enumerate}
\item for $t\in \R$ and $v\in X$,
\begin{equation}\label{ln1}
\|v\|\le \|v\|_t \le 2Ke^{\varepsilon|t|}\|v\|;
\end{equation}
\item for $t\ge s$ and $v\in X$,
\begin{equation}\label{ln2}
\|T(t, s)v\|_t \le e^{-\alpha (t-s)}\|v\|_s;
\end{equation}
\item for $t\le s$ and $v\in X$,
\begin{equation}\label{ln3}
\|T(t,s)v\|_t \le e^{-\alpha (s-t)}\|v\|_s;
\end{equation}
\item for each $v\in X$, the map $t\mapsto \|v\|_t$ is continuous.
\end{enumerate}
\end{proposition}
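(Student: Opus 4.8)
The plan is to construct the family $\{\|\cdot\|_t\}$ as a sum of two Lyapunov seminorms adapted to the stable and unstable directions, and then to read off the four properties. Concretely, for $t\in\R$ and $v\in X$ I would set
\[
\|v\|_t:=\sup_{\tau\ge t}e^{\alpha(\tau-t)}\|T(\tau,t)P(t)v\|+\sup_{\tau\le t}e^{\alpha(t-\tau)}\|T(\tau,t)(\Id-P(t))v\|,
\]
where in the second supremum $T(\tau,t)$ is the backward map on $\Ker P(t)$ from Definition~\ref{NED}. The first term records the slowest forward decay along the stable fibre and the second the slowest backward decay along the unstable fibre; the weights $e^{\alpha(\tau-t)}$ and $e^{\alpha(t-\tau)}$ exactly cancel the hyperbolic rate $\alpha$, which is what converts the nonuniform estimates into uniform ones. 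Homogeneity and the triangle inequality are inherited from $\|\cdot\|$, so $\|\cdot\|_t$ is at least a seminorm.

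Property~(1) would come first and is immediate. Taking $\tau=t$ in each supremum and using $\|P(t)v\|+\|(\Id-P(t))v\|\ge\|v\|$ gives the lower bound $\|v\|\le\|v\|_t$; for the upper bound I would insert~\eqref{D1} and~\eqref{D2}, which bound each weighted term by $Ke^{\varepsilon|t|}\|v\|$ uniformly in $\tau$, so their sum is at most $2Ke^{\varepsilon|t|}\|v\|$. Positive definiteness, and hence the fact that $\|\cdot\|_t$ is genuinely a norm, follows from the lower bound already obtained.

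For the contraction properties I would combine invariance of the splitting with a reindexing of the supremum. These estimates concern the evolution restricted to the two invariant subbundles; for~(3) this is forced, since for $t\le s$ the operator $T(t,s)$ is defined only on $\Ker P(s)$. Given $t\ge s$ and $v\in\Ima P(s)$, the image $w=T(t,s)v$ lies in $\Ima P(t)$, its unstable part vanishes, and the cocycle identity $T(\tau,t)T(t,s)=T(\tau,s)$ for $\tau\ge t$ yields
\[
\|w\|_t=\sup_{\tau\ge t}e^{\alpha(\tau-t)}\|T(\tau,s)v\|=e^{-\alpha(t-s)}\sup_{\tau\ge t}e^{\alpha(\tau-s)}\|T(\tau,s)v\|\le e^{-\alpha(t-s)}\|v\|_s,
\]
the last step because enlarging the index set from $\tau\ge t$ to $\tau\ge s$ only increases the supremum. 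This is property~(2). Property~(3) is its mirror image: for $t\le s$ and $v\in\Ker P(s)$ one uses the backward cocycle on the unstable fibres together with the inclusion $\{\tau\le t\}\subseteq\{\tau\le s\}$. These are precisely the uniformized forms of~\eqref{D1} and~\eqref{D2} along the two subbundles.

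The delicate point, and the step I expect to cost the most work, is the continuity in property~(4). After reparametrizing each supremum over the fixed half-line $\{\sigma\ge0\}$ (writing $\tau=t+\sigma$, resp. $\tau=t-\sigma$), the integrand $\sigma\mapsto e^{\alpha\sigma}\|T(t+\sigma,t)P(t)v\|$ is continuous in $t$ for each fixed $\sigma$, by continuity of the evolution family and of $t\mapsto P(t)v$, so $t\mapsto\|v\|_t$ is automatically lower semicontinuous as a supremum of continuous functions. Full continuity is the technical heart: because the weighted integrands are merely bounded and need \emph{not} decay in $\sigma$, one cannot simply truncate the tail. Instead I would exploit the self-referential relation that the cocycle property produces between the seminorms at nearby base points -- for $h>0$, splitting the stable supremum at $\tau=t+h$ expresses $\|v\|_t^s$ as the maximum of a contribution from the compact window $[t,t+h]$ and of $e^{\alpha h}\|T(t+h,t)v\|^s_{t+h}$ -- and combine it with the uniform bounds~\eqref{D1}--\eqref{D2} and the contraction already established to estimate the increment $|\,\|v\|_{t+h}-\|v\|_t\,|$ and let $h\to0$. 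This comparison across base points, which tames the non-compact tail of the supremum, is where essentially all the difficulty lies; the remaining verifications are formal.
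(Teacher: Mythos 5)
Your norms are exactly the ones the paper records in Remark~\ref{rem}(i), i.e.~\eqref{lnc}; note that the paper does not prove Proposition~\ref{LN} at all but quotes it from \cite[Proposition 5.6]{BDVbook}, so your proposal is a reconstruction of that argument. Your treatment of properties (1)--(3) is correct, and your restriction of \eqref{ln2} to $v\in\Ima P(s)$ (and of \eqref{ln3} to $v\in\Ker P(s)$, where $T(t,s)$ is even defined) is in fact the right reading of the statement: taken literally, \eqref{ln2} for \emph{every} $v\in X$ is false whenever $\Ker P(s)\neq\{0\}$ --- for $T(t,s)=\mathrm{diag}(e^{-(t-s)},e^{t-s})$ on $\R^2$ with $P(t)=\mathrm{diag}(1,0)$ one gets $\|T(t,s)(0,1)\|_t=e^{t-s}$ while $\|(0,1)\|_s=1$ --- and it would even contradict the lower bound in \eqref{ln1}. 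Be aware, though, that what the paper later uses is the bound $\|T(t,s)P(s)v\|_t\le e^{-\alpha(t-s)}\|v\|_s$ for all $v\in X$ (estimate \eqref{Green}); to deduce it from your version you must add the one-line observation that $\|P(s)v\|_s\le\|v\|_s$, which holds because $Q(s)P(s)=0$ kills the second supremum in \eqref{lnc}.

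The genuine gap is property (4): what you offer is a plan, not a proof, and it rests on an unjustified ingredient. You invoke continuity of $t\mapsto P(t)v$, which is \emph{not} part of Definition~\ref{NED} and itself requires an argument (right-continuity follows from the intertwining $P(t)T(t,s)v=T(t,s)P(s)v$, strong continuity of $T$, and the bound $\|P(t)\|\le Ke^{\varepsilon|t|}$ coming from \eqref{D1}; left-continuity is genuinely more delicate for noninvertible families). The same unproved fact already underlies your lower-semicontinuity claim. Your splitting identity is the right starting point: writing $\|v\|^s_t$ for the first supremum in \eqref{lnc}, one has, for $h>0$,
\[
\|v\|^s_t=\max\Bigl\{\sup_{\tau\in[t,t+h]}e^{\alpha(\tau-t)}\|T(\tau,t)P(t)v\|,\ e^{\alpha h}\,\|T(t+h,t)v\|^s_{t+h}\Bigr\},
\]
and combining it with the Lipschitz bound $\bigl|\,\|w\|^s_{u}-\|w'\|^s_{u}\bigr|\le Ke^{\varepsilon|u|}\|w-w'\|$ (a consequence of \eqref{D1}) applied to $w=T(t+h,t)v$, $w'=v$, together with a case distinction according to whether $\|v\|^s_t=\|P(t)v\|$ or $\|v\|^s_t>\|P(t)v\|$, does yield \emph{right}-continuity, and notably without any continuity of $P$. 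But left-continuity does not follow ``by symmetry'': splitting $\|v\|^s_{t'}$ at $t$ for $t'<t$ produces the window term $\sup_{\tau\in[t',t]}e^{\alpha(\tau-t')}\|T(\tau,t')P(t')v\|$, in which the projection $P(t')$ varies with the base point, and controlling this uniformly over the shrinking window is precisely where continuity of $t\mapsto P(t)v$ (or a Banach--Steinhaus local-uniform-boundedness argument for the family $T$) must enter. None of this is carried out in your proposal, and it is not ``formal''; since you yourself identify (4) as the technical heart, the proposal stops short of a proof exactly where the work lies.
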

\begin{remark}\label{rem}
\begin{enumerate}
\item[(i)] For the convenience of the reader we recall the construction from~\cite{BDVbook}: for $v\in X$ and $t\in \R$ set
\begin{equation}\label{lnc}
\|v\|_t:=\sup_{s\ge t}\left (\|T(s, t)P(t)v\|e^{\alpha (s-t)}\right)+\sup_{s\le t}\left (\|T(s, t)Q(t)v\|e^{\alpha (t-s)}\right ),
\end{equation}
where $Q(t):=\Id-P(t)$.
\item[(ii)] We note that in the case where $P(t)=\Id$ for each $t\in \R$, or in the case where $P(t)=0$ for every $t\in \R$, we can replace $2K$ by $K$ in~\eqref{ln1} (as one of the two terms on the right-hand side of~\eqref{lnc} vanishes).
\end{enumerate}
\end{remark}

\section{The robustness result}
Throughout this section,  $\{T(t, s): \ t, s\in \R, \ t\ge s\}\subset \mathcal B(X)$ will be an evolution family. Moreover, let $B\colon \R\to \mathcal  B(X)$ be a continuous map. In the sequel, we suppose that there is an evolution family $\{U(t, s): \ t, s\in \R, \ t\ge s\}\subset \mathcal B(X)$ such that 
\begin{equation}\label{U}
U(t, s)=T(t, s)+\int_s^t T(t, \tau)B(\tau)U(\tau, s)\, d\tau,
\end{equation}
for $t\ge s$. We refer to~\cite{RSRV} for the sufficient conditions that guarantee this.

\begin{theorem}\label{robthm}
Suppose that $\{T(t, s): \ t, s\in \R, \ t\ge s\}\subset \mathcal  B(X)$ admits a nonuniform exponential dichotomy and let $K, \alpha>0$ and $\varepsilon\ge 0$ be as in Definition~\ref{NED}. Furthermore, assume that 
\begin{equation}\label{rob}
    q:=2K\sup_{t\in \R}\int_{-\infty}^\infty e^{-\alpha |t-s|}b(s)\, ds<1,
\end{equation}
where $b(t):=\|B(t)\|e^{\varepsilon |t|}$, $t\in \R$. Then the evolution family $\{U(t, s): \ t, s\in \R, \ t\ge s\}\subset \mathcal B(X)$ admits a nonuniform exponential dichotomy.
\end{theorem}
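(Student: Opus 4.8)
The plan is to exploit the equivalence between the existence of a nonuniform exponential dichotomy and an \emph{admissibility} property, using the family of Lyapunov norms $\{\|\cdot\|_t\}$ supplied by Proposition~\ref{LN} to reduce the nonuniform estimates to uniform ones. With respect to these norms the family $T$ contracts at rate $e^{-\alpha(t-s)}$ on the stable direction and at rate $e^{-\alpha(s-t)}$ on the unstable direction, both with unit constant; moreover the construction~\eqref{lnc} yields $\|v\|_t=\|P(t)v\|_t+\|Q(t)v\|_t$, so the dichotomy projections satisfy $\|P(t)\|_t,\|Q(t)\|_t\le 1$. Accordingly I would work in the Banach space
\[
\mathcal{Y}:=\Big\{x\colon \R\to X \text{ continuous} : \ \|x\|_{\mathcal{Y}}:=\sup_{t\in\R}\|x(t)\|_t<\infty\Big\},
\]
which is complete because $\|\cdot\|\le\|\cdot\|_t$ forces pointwise, and then uniform (in the Lyapunov norm), convergence of Cauchy sequences.

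Next I would introduce the Green operator associated with the dichotomy of $T$,
\[
(\mathbb{G}f)(t):=\int_{-\infty}^{t}T(t,\tau)P(\tau)f(\tau)\,d\tau-\int_{t}^{\infty}T(t,\tau)(\Id-P(\tau))f(\tau)\,d\tau,
\]
and check, using~\eqref{ln2}, \eqref{ln3} and $\|P(\tau)v\|_\tau,\|Q(\tau)v\|_\tau\le\|v\|_\tau$, the pointwise bound
\[
\|(\mathbb{G}f)(t)\|_t\le\int_{-\infty}^{\infty}e^{-\alpha|t-\tau|}\|f(\tau)\|_\tau\,d\tau,
\]
so that $\mathbb{G}$ is bounded on $\mathcal{Y}$ with $\|\mathbb{G}\|\le 2/\alpha$. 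The standard computation using the invariance $T(t,s)P(s)=P(t)T(t,s)$ shows that $x=\mathbb{G}f$ is the unique element of $\mathcal{Y}$ solving $x(t)=T(t,s)x(s)+\int_s^t T(t,\tau)f(\tau)\,d\tau$ for all $t\ge s$; this is the admissibility property for $T$.

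The key — and genuinely new — step is the transfer of admissibility to $U$. Using~\eqref{U} and Fubini one shows that $x\in\mathcal{Y}$ solves $x(t)=U(t,s)x(s)+\int_s^t U(t,\tau)g(\tau)\,d\tau$ for all $t\ge s$ if and only if $x=\mathbb{G}\big(B(\cdot)x(\cdot)+g\big)$, i.e. $(\Id-\mathbb{G}B)x=\mathbb{G}g$, where $\mathbb{G}B$ denotes the composition $x\mapsto\mathbb{G}(B(\cdot)x(\cdot))$. The decisive estimate is that $\mathbb{G}B$ is a well-defined bounded operator on $\mathcal{Y}$ with $\|\mathbb{G}B\|_{\mathcal{Y}}\le q$: indeed, by~\eqref{ln1},
\[
\|B(\tau)x(\tau)\|_\tau\le 2Ke^{\varepsilon|\tau|}\|B(\tau)\|\,\|x(\tau)\|\le 2Kb(\tau)\|x\|_{\mathcal{Y}},
\]
whence $\|(\mathbb{G}B x)(t)\|_t\le 2K\|x\|_{\mathcal{Y}}\int_{-\infty}^\infty e^{-\alpha|t-\tau|}b(\tau)\,d\tau\le q\|x\|_{\mathcal{Y}}$; note that the integrability in~\eqref{rob} makes $\mathbb{G}B$ well-defined even though $b$ need not be bounded, so that $B(\cdot)x(\cdot)$ itself need not lie in $\mathcal{Y}$. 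Since $q<1$, the operator $\Id-\mathbb{G}B$ is invertible by a Neumann series, and hence for every $g\in\mathcal{Y}$ there is a unique solution $x=(\Id-\mathbb{G}B)^{-1}\mathbb{G}g\in\mathcal{Y}$; this is exactly the admissibility property for $U$.

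Finally I would invoke the characterization of nonuniform exponential dichotomies through admissibility (as developed in~\cite{BDV1,BDV2,BDV3,BDVbook}) to conclude that $U$ admits, with respect to the norms $\{\|\cdot\|_t\}$, projections $\bar P(t)$ and constants $M,\beta>0$ yielding uniform dichotomy estimates; converting back to $\|\cdot\|$ via~\eqref{ln1} then produces
\[
\|U(t,s)\bar P(s)\|\le 2KM\,e^{-\beta(t-s)+\varepsilon|s|},\qquad t\ge s,
\]
together with the symmetric bound on the unstable part, which are precisely~\eqref{D1}--\eqref{D2} with the same $\varepsilon$. The main obstacle is this last implication, admissibility $\Rightarrow$ dichotomy: one must construct the projections $\bar P(t)$ for $U$ (as the subspaces of initial data whose forward, resp. backward, orbits lie in $\mathcal{Y}$), prove that they are complementary and uniformly bounded, and extract the exponential rates from the solvability of the inhomogeneous equation. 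The remaining technical points — completeness of $\mathcal{Y}$, continuity of $\mathbb{G}f$, and the equivalence of the two integral formulations via~\eqref{U} — are routine.
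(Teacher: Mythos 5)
Your strategy is essentially the paper's: pass to the Lyapunov norms of Proposition~\ref{LN}, bound the Green operator by $e^{-\alpha|t-s|}$ via~\eqref{Green}, absorb the perturbation using the smallness in~\eqref{rob} (your Neumann series for $\Id-\mathbb{G}B$ is the paper's contraction mapping $\mathcal T$), and conclude through an admissibility characterization. The one substantive divergence is your choice of admissible pair, and that is exactly where the gap lies. You establish solvability of the $U$-equation only for inhomogeneities $g\in\mathcal{Y}$, i.e.\ continuous and bounded in the norms $\|\cdot\|_t$, and then wish to invoke ``admissibility $\Rightarrow$ dichotomy.'' But the result available in the cited literature, namely \cite[Theorem 5.8]{BDVbook}, which is what the paper uses, takes as hypothesis unique solvability for \emph{all} inhomogeneities in the larger space $\check M$ of locally integrable $y$ with $\sup_{t}\int_t^{t+1}\|y(s)\|_s\,ds<\infty$. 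Since $\mathcal{Y}\subset\check M$, solvability for $\mathcal{Y}$-inputs is a strictly weaker hypothesis, so the implication you need (a $(C_b,C_b)$-type admissibility theorem with respect to a family of norms, for arbitrary, possibly noninvertible evolution families on the line) is strictly stronger than the theorem you can cite. You flag this step yourself as ``the main obstacle'' but do not supply it, and it is by far the hardest part of any admissibility argument: constructing the projections for $U$, proving their uniform boundedness and invariance, and extracting the exponential rates.

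Fortunately the repair lies entirely inside your own argument: your Neumann-series step never uses that $g\in\mathcal{Y}$, only that $\mathbb{G}g\in\mathcal{Y}$, and $\mathbb{G}$ maps all of $\check M$ boundedly into $\mathcal{Y}$ (split $\int_{-\infty}^{\infty} e^{-\alpha|t-s|}\|g(s)\|_s\,ds$ into integrals over the intervals $[t+m,t+m+1]$, as in the paper). Taking $\check M$ as the input space, you obtain unique solvability of the $U$-equation for every $g\in\check M$, and then \cite[Theorem 5.8]{BDVbook} applies verbatim; at that point your proof coincides with the paper's. A second, lesser advantage of $\check M$: since $\|B(s)x(s)\|_s\le 2Kb(s)\|x\|_{\mathcal{Y}}$ and~\eqref{rob} bounds $\int_t^{t+1}b(s)\,ds$ uniformly in $t$, the perturbation term $B(\cdot)x(\cdot)$ genuinely lies in $\check M$, which makes the identity relating the two integral equations (the paper's~\eqref{RS}) clean. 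In your setup this term lies outside $\mathcal{Y}$, so your claimed equivalence (solving the $U$-equation $\Leftrightarrow$ being a fixed point) needs an extra uniqueness argument for the $T$-equation with inhomogeneities outside $\mathcal{Y}$ --- the difference of two bounded solutions solves the homogeneous equation and is annihilated by the dichotomy --- which is doable but is another detail your sketch glosses over.
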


\begin{proof}
Let $\|\cdot\|_t$, $t\in \R$ be the family of norms given by Proposition~\ref{LN} (see Remark~\ref{rem}) associated with the nonuniform exponential dichotomy of the evolution family $T(t, s)$. We introduce two function spaces. More precisely, let $\check{C}$ denote the space of all continuous maps $x\colon \R \to X$ such that
\[
\|x\|_\infty:=\sup_{t\in \R}\|x(t)\|_t<+\infty.
\]
Then $(\check C, \|\cdot \|_\infty)$ is a Banach space. Moreover, let $\check{M}$ be the space of all locally (Bochner) integrable functions $x\colon \R \to X$ satisfying
\[
\|x\|_{\check M}:=\sup_{t\in \mathbb R}\int_t^{t+1}\|x(s)\|_s\, ds<+\infty.
\]
Then $(\check{M}, \|\cdot \|_{\check M})$ is a Banach space. 
By~\cite[Theorem 5.8]{BDVbook}, for each $y\in \check M$ there exists a unique $x\in \check C$ such that 
\begin{equation}\label{adm}
x(t)=T(t,s)x(s)+\int_s^t T(t, \tau)y(\tau)\, d\tau \quad \text{for $t\ge s$.}
\end{equation}
Moreover, $x$ is given by 
\begin{equation}\label{adm2}
x(t)=\int_{-\infty}^\infty \mathcal G(t,s)y(s)\, ds \quad t\in \R, 
\end{equation}
where 
\[
\mathcal G(t, s):=\begin{cases}
T(t, s)P(s) &t\ge s; \\
-T(t, s)(\Id-P(s)) & t<s.
\end{cases}
\]

Let $R\colon \mathcal D(R)\subset \check C\to \check M$ be a linear operator defined by $Rx=y$ on the domain $\mathcal D(R)$ formed by all $x\in \check C$ for which there exists $y\in \check M$ such that~\eqref{adm} holds.
Moreover, let $S\colon \mathcal D(S)\subset \check C\to \check M$ be defined in a same manner by replacing $T(t, s)$ with $U(t, s)$. We then have (see the proof of~\cite[Theorem 6.3]{BDVbook}) that $\mathcal D(R)=\mathcal D(S)$ and 
\begin{equation}\label{RS}
(Rx)(t)=(S x)(t)+B(t)x(t), \quad \text{for $t\in \R$ and $x\in \mathcal D(R)=\mathcal D(S)$.}
\end{equation}

Take an arbitrary $y\in \check M$ and define $\mathcal T\colon \check C\to \check C$ by 
\[
(\mathcal Tx)(t):=\int_{-\infty}^\infty \mathcal G(t, s)(y(s)+B(s)x(s))\, ds, \quad \text{for $t\in \R$ and $x\in \check C$.}
\]
In order to show that $\mathcal T$ is well defined we begin by 
observing that~\eqref{ln2} and~\eqref{ln3} imply that 
\begin{equation}\label{Green}
\|\mathcal G(t, s)v\|_t\le e^{-\alpha |t-s|}\|v\|_s, \quad \text{for $t, s\in \R$ and $v\in X$.}
\end{equation}
Hence, by using~\eqref{ln1} we have that for any $x\in \check C$,
\[
\begin{split}
&\|(\mathcal Tx)(t)\|_t \\
&\le \int_{-\infty}^\infty \|\mathcal G(t, s)(y(s)+B(s)x(s))\|_t\, ds \\
&\le \int_{-\infty}^\infty e^{-\alpha |t-s|}\|y(s)+B(s)x(s)\|_s\, ds \\
&\le \int_{-\infty}^\infty e^{-\alpha |t-s|}\|y(s)\|_s\, ds+2K\int_{-\infty}^\infty e^{-\alpha |t-s|}e^{\varepsilon |s|}\|B(s) x(s)\|\, ds \\
&\le \sum_{m=-\infty}^\infty \int_{t+m}^{t+m+1}e^{-\alpha |t-s|}\|y(s)\|_s\, ds+2K\int_{-\infty}^\infty e^{-\alpha |t-s|}e^{\varepsilon |s|}\|B(s)\| \|x(s)\|\, ds \\
&\le \frac{2D}{1-e^{-\lambda}}\|y\|_{\check M}+2K\int_{-\infty}^\infty e^{-\alpha |t-s|}b(s) \|x(s)\|_s\, ds\\
&\le \frac{2D}{1-e^{-\lambda}}\|y\|_{\check M}+q\|x\|_\infty,
\end{split}
\]
for all $t\in \R$. Consequently, 
\[
\|\mathcal Tx \|_\infty \le \frac{2D}{1-e^{-\lambda}}\|y\|_{\check M}+q\|x\|_\infty,
\]
and thus $\mathcal T$ is well defined. Moreover, the same argument gives that 
\[
\|\mathcal Tx_1-\mathcal Tx_2\|_\infty \le q\|x_1-x_2\|_\infty, \quad \text{for $x_i\in \check C$, $i=1, 2$,}
\]
as
\[
(\mathcal Tx_1)(t)-(\mathcal Tx_2)(t)=\int_{-\infty}^\infty \mathcal G(t, s)B(s)(x_1(s)-x_2(s))\, ds, \quad t\in \R.
\]
Due to~\eqref{rob} we conclude that $\mathcal T$ is a contraction on $\check C$. Therefore, there exists a unique $x\in \check C$ that satisfies $\mathcal Tx=x$, that is,
\[
x(t)=\int_{-\infty}^\infty \mathcal G(t, s)(y(s)+B(s)x(s))\, ds, \quad t\in \R.
\]
Then
\[
\begin{split}
x(t)-T(t, r)x(r) &=\int_{-\infty}^t T(t, s)P(s)(y(s)+B(s)x(s))\, ds \\
&\phantom{=}-T(t, r)\int_{-\infty}^r T(r,s)P(s)(y(s)+B(s)x(s))\, ds \\
&\phantom{=}-\int_t^{\infty}T(t, s)(\Id-P(s))(y(s)+B(s)x(s))\, ds\\
&\phantom{=}+T(t, r)\int_r^\infty T(r, s)(\Id-P(s))(y(s)+B(s)x(s))\, ds\\
&=\int_r^t T(t, s)P(s)(y(s)+B(s)x(s))\, ds \\
&\phantom{=}+\int_r^tT(t, s)(\Id-P(s))(y(s)+B(s)x(s))\, ds \\
&=\int_r^t T(t, s)(y(s)+B(s)x(s))\, ds,
\end{split}
\]
for $t\ge r$.  This implies that $x\in \mathcal D(R)$ and $Rx=y+B(\cdot)x(\cdot)$. Due to~\eqref{RS}, we conclude that $x\in \mathcal D(S)$ and $Sx=y$. 

We have proved that for each $y\in \check M$ there exists $x\in \mathcal D(S)$ satisfying $Sx=y$. Note that such $x$ is also unique. In fact, assuming that $\tilde x\in \mathcal D(S)$ is such that $S\tilde x=y$, from~\eqref{RS} we conclude that
$R\tilde x=y+B(\cdot)\tilde x(\cdot)$. Observe that $y+B(\cdot)\tilde x(\cdot)\in \check M$. This implies (recall~\eqref{adm2} and the preceding discussion) that $\tilde x$ is a fixed point of $\mathcal T$ and, consequently, $\tilde x=x$.

From the above, we conclude that for each $y\in \check M$ there is a unique $x\in \check C$ such that $Sx=y$.
By~\cite[Theorem 5.8]{BDVbook}, we conclude that $U(t, s)$ admits an exponential dichotomy with respect to the norms $\|\cdot \|_t$, $t\in \R$ in the sense of~\cite[Definition 5.6]{BDVbook}, which together with~\eqref{ln1} and~\cite[Proposition 5.6]{BDVbook} implies that $U(t,s)$ admits a nonuniform exponential dichotomy. This completes the proof of the theorem.
\end{proof}

\begin{remark}
If $T(t, s)$ admits a nonuniform exponential dichotomy with respect to projections $P(t)=\Id$, $t\in \R$ or with $P(t)=0$, $t\in \R$, the condition~\eqref{rob} can be relaxed as follows:
\[
K\sup_{t\in \R}\int_{-\infty}^\infty e^{-\alpha |t-s|}b(s)\, ds<1. 
\]
This follows from the observation given in Remark~\ref{rem} (ii).
\end{remark}

\begin{remark}\label{RM3}
Let us compare Theorem~\ref{robthm} with~\cite[Theorem 9]{PPX}. Firstly, we note that~\cite[Theorem 9]{PPX} deals with the case of ordinary differential equations. More precisely, $T(t, s)$ is an evolution family corresponding to a nonautonomous differential equation
\[
x'=A(t)x \quad t\in \R,
\]
where $A\colon \R \to B(X)$ is a continuous map. In this case, $U(t, s)$ in~\eqref{U} is an evolution family associated with the equation \[ x = (A (t) + B (t)) x, \quad t\in \R.
\]
On the other hand, evolution families considered in our work are not necessarily associated with ordinary differential equations and generally consist of non-invertible operators. 

Secondly, \cite[Theorem 9]{PPX} established the robustness result under the following conditions (see~\cite[p. 408]{PPX}):
\begin{enumerate}
\item[(a)] $\varepsilon <2\alpha_2$;
\item[(b)] 
\begin{equation}\label{rob2}
K\sup_{t\in \R}\int_{-\infty}^\infty e^{-\alpha_1 |t-s|} \bar b(s)\, ds \le \frac{1}{K+1},
\end{equation}
where $\bar b(t):=\|B(t)\|e^{2\varepsilon |t|}$, $t\in \R$. Moreover, $\alpha_i>0$, $i=1, 2$ are such that $\alpha=\alpha_1+\alpha_2$.
\end{enumerate}
In contrast, we note that in the statement of Theorem~\ref{robthm} we do not make any assumptions related to the smallness of $\varepsilon$ compared to $\alpha$. In fact, we do not impose any requirement of this kind. Consequently, $(a)$ does not have its counterpart in the present work.
Moreover, assuming that~\eqref{D1} and~\eqref{D2} hold with $K>1$ we note that~\eqref{rob2} implies~\eqref{rob} as
\[
\int_{-\infty}^\infty e^{-\alpha |t-s|}b(s)\, ds\le \int_{-\infty}^\infty e^{-\alpha_1 |t-s|} \bar b(s)\, ds,
\]
and $\frac{1}{K(K+1)}<\frac{1}{2K}$ (recall that $\alpha_1<\alpha$ and $K>1$).
Notice that in this case, the requirement~\eqref{rob} is considerably weaker as in~\eqref{rob} we have a factor $e^{-\alpha |t-s|}$ instead of $e^{-\alpha_1|t-s|}$ in~\eqref{rob2}, and moreover $\bar b(t)= b(t)e^{\varepsilon |t|}$ for $t\in \R$.
\end{remark}
Let us discuss a concrete example.
\begin{example}
We consider~\cite[Example 1]{PPX}. Let $X=\R^2$ and consider a linear differential equation (on $X$) given by
\begin{equation}\label{LDE}
x'=A(t)x \quad t\in \R, 
\end{equation}
where 
\[
A(t)=\begin{pmatrix}
-\omega -at \sin t & 0\\
0 &\omega +at\sin t
\end{pmatrix},
\]
where $\omega >a$. As observed in~\cite[p.409]{PPX}, \eqref{LDE} admits a nonuniform exponential dichotomy. Moreover, \eqref{D1} and~\eqref{D2} hold with $K=e^{2a}$, $\alpha=\omega-a$, $\varepsilon =2a$ and $P(t)(v_1, v_2)=(v_1, 0)$ for $(v_1, v_2)\in X$ and $t\in \R$. 

It follows from Young's inequality for convolutions that~\eqref{rob} is satisfied whenever
\begin{equation}\label{some}
\frac 2 p (2K)^p \|b\|_{L^q}^p<\alpha,
\end{equation}
where $q>1$, $\frac 1 p+\frac 1 q=1$ and $\|b\|_{L^q}=\left (\int_{-\infty}^\infty b(t)^q\, dt \right )^{1/q}$. To compare~\eqref{some} with~\cite{PPX}, we first need to (slightly) correct~\cite[(6.3)]{PPX} which should read as follows:
\begin{equation}\label{some2}
\frac 2 p K^p (K+1)^p \|\bar b\|_{L^q}^p<\alpha_1 <\alpha.
\end{equation}
Observe that~\eqref{some2} implies~\eqref{some}, while the converse does not hold. Moreover, to apply the robustness result from~\cite{PPX} we would need to impose the requirement that $\omega >2a$ (see~\cite[p.409]{PPX}), which here we do not require. 
\end{example}

\section*{Acknowledgement}

 This paper has been funded by the European Union – NextGenerationEU-Statistical properties of random dynamical systems and other contributions to mathematical analysis and probability theory.

\end{document}